\newtheorem{theorem}{Theorem}[section]
\theoremstyle{plain}
\newtheorem{corollary}[theorem]{Corollary}
\newtheorem{lemma}[theorem]{Lemma}
\newtheorem{proposition}[theorem]{Proposition}
\numberwithin{equation}{section}
\theoremstyle{definition}
\newtheorem{remark}[theorem]{Remark}
\newtheorem{definition}[theorem]{Definition}
\newtheorem{example}[theorem]{Example}
\newcounter{tempcomment}[section]
\newtheorem{problem-comment-thm}[tempcomment]{Problem}
\newtheorem{comment-thm}[tempcomment]{Comment}
\newcommand{\FACT}{\mathsf{FAct}}
\newcommand{\ID}{\mathtt{id}}
\newcommand{\END}{\mathtt{End}}
\newcommand{\ot}{\otimes}
\newcommand{\im}{\mathrm{im}}
\newcommand{\sk}[1]{\left\langle\,#1\,\right\rangle}
\title
{On connections between Morita semigroups and strong Morita equivalence}
\author{Alvin Lepik}
\subjclass[2010] {20M30}
\keywords{dual pair, Morita context, Morita equivalence, Morita semigroup, Rees matrix semigroup}
\thanks{This work was partially supported by the Estonian Research Council grant PRG1204.}
\begin{document}
\address{Institute of Mathematics and Statistics, University of Tartu, Narva rd 18, Tartu, Estonia}
\email{alvin.lepik@ut.ee}
\begin{abstract}
    A surjective Morita context connecting semigroups $S$ and $T$ yields a Morita semigroup and a strict local isomorphism from it onto $S$ along which idempotents lift. We describe strong Morita equivalence of firm semigroups in terms of Morita semigroups and isomorphisms. We also generalize some of Hotzel's theorems to semigroups with weak local units. In particular, the Morita semigroup induced by a dual pair $\beta$ over a semigroup with weak local units can be identified with $\Sigma ^\beta$.
\end{abstract}
\maketitle

\section{Introduction}
The study of Morita equivalence began in the theory of rings with identity. In the seventies, Banaschewski \cite{B72} and Knauer \cite{K72} independently developed Morita theory of monoids regarding monoids to be Morita equivalent if the categories of right acts, satisfying the identity $x1=x$, are equivalent. Banaschewski \cite{B72} also showed that categories of right acts over semigroups are equivalent only if the semigroups themselves are isomorphic. Further developments were brought about in the nineties, when Talwar \cite{T} provided a non category theoretical notion of Morita equivalence for semigroups with local units and extended it to the subclass of factorizable semigroups. Since then, Morita equivalence of semigroups with local units has been described by Lawson \cite{Lw} and Laan and M\'{a}rki \cite{LM} with an array of qualitatively different conditions.
\vspace{1em}

In the present paper, we mainly focus on strong Morita equivalence of semigroups and seek to generalize results from Hotzel \cite{Hz}  and Laan and M\'{a}rki \cite{LM}, which is the subject of Sections 2 and 3. In particular, we study relationships between Morita semigroups, Rees matrix semigroups and certain semigroups of adjoint pairs of endomorphisms of acts. Among other things we prove in Section 2 that two firm semigroups are strongly Morita equivalent if and only if any of them is isomorphic to a surjectively defined Morita semigroup over the other. In Section 3 we turn to dual pairs of acts, introduced by Hotzel \cite{Hz}, which he uses to describe completely $0$-simple semigroups. Hotzel's work has also inspired a number of ring theoretic papers. \'{A}nh and M\'{a}rki \cite{AM} describe rings with minimal one-sided ideals in terms of Rees matrix rings and dual pairs of modules. \'{A}nh \cite{Anh} describes Morita equivalence of rings with local units in terms of locally projective pairs (these are dual pairs satisfying further restrictions) and tensor product rings. We use dual pairs to deduce a sufficient condition for strong Morita equivalence of semigroups with weak local units.
\vspace{1em}

Throughout this paper, $S$ denotes a semigroup. We are considering the following subclasses of semigroups, listed in ascending order containment-wise, where all containments are proper.
\begin{definition}
A semigroup $S$
\begin{enumerate}
    \item[i)] has \textbf{local units} if for every $s\in S$, there exist idempotents $e,f\in S$ such that $fs=s=se$;
    \item[ii)] has \textbf{weak local units} if for every $s\in S$, there exist $u,v\in S$ such that $us=s=sv$;
    \item[iii)] is \textbf{firm} if the map
    \[
        S\ot _SS \to S,\quad s\ot s' \mapsto ss'
    \]
    is bijective;
    \item[iv)] is \textbf{factorizable} if for every $s\in S$, there exist $s_1,s_2\in S$ such that $s=s_1s_2$.
\end{enumerate}
\end{definition}
For subsets $U,V\subseteq S$ we write $UV := \{uv \mid u\in U, v\in V\}$. For singleton subsets we write $aS := \{a\}S$. A semigroup act $A_S$ (${}_SA$) is called \textbf{unitary} if $AS=A$ ($SA=A$). An $(S,T)$-biact ${}_SA_T$ is \textbf{unitary} if $A$ is unitary both as a left $S$-act and a right $T$-act. Semigroup theoretic notions that are not explicitly defined in this paper are covered in Howie's book \cite{H}. Subject matter pertaining to tensor products of acts is covered in \cite{KKM}.
\vspace{1em}

A central notion is that of a Morita context due to Talwar \cite{T}.
\begin{definition}
A \textbf{Morita context} connecting semigroups $S$ and $T$ is a six-tuple \\$(S,T,{}_SP_T,{}_TQ_S,\theta,\phi)$, where ${}_SP_T$ is an $(S,T)-$biact, ${}_TQ_S$ is a $(T,S)-$biact and
\begin{align*}
    \theta : {}_S(P \ot _T Q)_S \to {}_SS_S\quad\mbox{and}\quad
    \phi : {}_T(Q\ot _SP)_T \to {}_TT_T
\end{align*}
are biact morphisms satisfying the identities
\begin{align*}
     \theta (p\ot q)p' = p\phi (q\ot p')\quad \mbox{and}\quad q'\theta (p\ot q) = \phi (q'\ot p)q.
\end{align*}
A Morita context is called
\begin{enumerate}
    \item[i)] \textbf{unitary} if the biacts are unitary;
    \item[ii)] \textbf{surjective} if the biact morphisms are surjective.
\end{enumerate}
Semigroups that are connected by a unitary surjective Morita context are called \textbf{strongly Morita equivalent} \cite{T}. 

\end{definition}
Strong Morita equivalence is an equivalence relation on the subclass of factorizable semigroups. In fact, strong Morita equivalence can only occur between factorizable semigroups by Proposition 1 in \cite{LM}. From the category theoretical perspective, semigroups $S$ and $T$ are called \textbf{Morita equivalent} if the categories of firm acts $\FACT _S$ and $\FACT _T$ are equivalent (cf. \cite{Lw}). However, in the subclass of factorizable semigroups, it is sufficient to consider strong Morita equivalence, which coincides with the category theoretical Morita equivalence by Theorem 4.11 in \cite{LR}.

\section{Morita semigroups and strong Morita equivalence}
The following definition is due to Talwar \cite{T}.
\begin{definition}
Let ${}_SP$ and $Q_S$ be $S$-acts. A \textbf{Morita semigroup} over $S$ defined by $\sk{,}$ is the set $Q\otimes _SP$ with multiplication
\[
    (q\otimes p)(q'\otimes p') := q\otimes \sk{p,q'}p',
\]
where $\sk{,}:{}_SP\times Q_S \to {}_SS_S$ is an $(S,S)$-biact morphism. The Morita semigroup is
\begin{enumerate}
    \item[i)] \textbf{unitary} if $P$ and $Q$ are unitary $S$-acts;
    \item[ii)] \textbf{surjectively defined} if the map $\sk{,}$ is surjective.
\end{enumerate}
\end{definition}
\begin{example}\label{ex1.3}
Every Morita context gives, in a natural way, rise to two 
Morita semigroups. Let arbitrary semigroups $S$ and $T$ be connected by a Morita context \\
$(S,T,{}_SP_T,{}_TQ_S,\theta,\phi)$. Then 
using the biact morphism
\[
\sk{,}:{}_SP\times Q_S \to {}_SS_S, \;\; 
(p,q)\mapsto \theta(p\otimes q),
\]
we can turn $Q\otimes _SP$ into a Morita semigroup with multiplication 
\[
(q\ot p)(q'\ot p') = q\ot \theta(p\ot q')p' 
= q\ot p\phi(q'\ot p').
\]

The equalities
\[
    \phi ((q\ot p)(q'\ot p')) = \phi (q\ot p\phi (q'\ot p')) = \phi (q\ot p)\phi (q'\ot p')
\]
yield that $\phi$ is a semigroup morphism. In a similar way, $P\otimes_T Q$ is a Morita semigroup.
\end{example}
Given a Morita context with morphisms $\theta$ and $\phi$, these, of course, need not be isomorphisms, but they do have good properties, in general.
\begin{definition}
We say a semigroup morphism $\varphi :S\to T$ is \textbf{almost injective} if it is injective on all subsemigroups of the form $aSb$, where $a\in Sa$ and $b\in bS$. An almost injective semigroup morphism is called a \textbf{strict local isomorphism} if it is also surjective. \textbf{Idempotents lift along} $\varphi$, if for every $f\in E(T)$, there exists $e\in E(S)$ such that $f=\varphi (e)$.
\end{definition}
\begin{remark}
Given a strict local isomorphism along which idempotents lift, regular elements also lift by Lemma 3.1 in \cite{LwM}.
\end{remark}
Strict local isomorphisms along which idempotents lift appear in a covering theorem by Rees matrix semigroups (cf. Theorem 3.2 in \cite{LwM}). Laan and M\'{a}rki \cite{LM} also use such morphisms to describe strong Morita equivalence of semigroups with local units. 
\vspace{1em}

A semigroup $S$ is said to have \textbf{common weak local units} if for every $s,s'\in S$ there exist $u,v\in S$ such that $us=s, us'= s'$ and $sv=s, s'v=s'$. Semigroups with common weak local units are introduced in \cite{LMR} and also shown to be firm (cf. Proposition 2.4).
\begin{lemma}\label{Kristo_P3.7}
Let $\varphi :S\to T$ be a semigroup morphism. Assume that $S$ has common weak local units.
The following are equivalent.
\begin{itemize}
    \item[]1. $\varphi$ is almost injective.
    \item[]2. $\varphi \vert _{sS}$ is injective for every $s\in S$.
    \item[]3. $\varphi \vert _{Ss}$ is injective for every $s\in S$.
\end{itemize}
\end{lemma}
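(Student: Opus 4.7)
The plan is to prove $(2) \Rightarrow (1)$ and $(1) \Rightarrow (2)$; the equivalence $(1) \Leftrightarrow (3)$ then follows by a symmetric argument.

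For $(2) \Rightarrow (1)$, the observation is essentially formal and does not even require common weak local units: for any $a, b \in S$ with $a \in Sa$ and $b \in bS$, we have the set-theoretic inclusion $aSb \subseteq aS$, so injectivity of $\varphi|_{aS}$ restricts to injectivity on the subsemigroup $aSb$. Likewise $(3) \Rightarrow (1)$ follows from $aSb \subseteq Sb$.

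For $(1) \Rightarrow (2)$, I would fix $s \in S$ and pick $x, y \in sS$ with $\varphi(x) = \varphi(y)$; the aim is to exhibit both inside a subsemigroup of the form $uSv$ with $u \in Su$ and $v \in vS$, so that almost injectivity can be applied. Invoking common weak local units on the pair $\{x,y\}$ produces $u, v \in S$ with $ux = x$, $uy = y$, $xv = x$, $yv = y$, whence $x = uxv$ and $y = uyv$ both lie in $uSv$. To check the side conditions, apply common weak local units to $\{u\}$ to get $u' \in S$ with $u'u = u$, so $u \in Su$, and analogously $v \in vS$. Hence $uSv$ is an admissible subsemigroup for $(1)$, giving $x = y$ and thus injectivity of $\varphi|_{sS}$. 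The direction $(1) \Rightarrow (3)$ is obtained from the symmetric calculation on $Ss$.

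The only nontrivial ingredient is the passage from the naked element $x \in sS$ to its expression $x = uxv$ with $u \in Su$ and $v \in vS$; this is precisely what common weak local units deliver (and why this hypothesis, rather than mere weak local units, is needed, since common weak local units handle $x$ and $y$ simultaneously on each side). Once this trick is in place, the three conditions collapse into each other with no further computation.
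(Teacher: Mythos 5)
Your proof is correct and follows essentially the same strategy as the paper: the implications $2.\Rightarrow 1.$ and $3.\Rightarrow 1.$ are the formal inclusions $aSb\subseteq aS$ and $aSb\subseteq Sb$, and for $1.\Rightarrow 2.$ one uses common weak local units to place the two elements of $sS$ with equal images inside a single admissible subsemigroup of the form $aSb$. The only cosmetic difference is that the paper lands the elements $ss', ss''$ in $sSu$ using a single common right unit $u$, whereas you land $x,y$ in $uSv$ using common units on both sides; both variants are valid.
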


\begin{proof}
1. $\Rightarrow$ 2. 
Assume $\varphi$ is almost injective and take $s\in S$. Take $s',s''\in S$ such that $\varphi (ss') = \varphi (ss'')$. Since $S$ has common weak local units, there exists $u\in S$ such that $ss'=ss'u$ and $ss''=ss''u$. 
Now $\varphi (ss'u) = \varphi (ss''u)$ implies 
$ss'u = ss''u$ and hence $ss'=ss''$. The implication $1. \Rightarrow 3.$ is proved similarly. Implications $2. \Rightarrow 1.$ and $3. \Rightarrow 1.$ hold with no restrictions to $S$.
\end{proof}
\begin{remark}
A ring $R$ is called \textbf{s}-\textbf{unital} if for every $s\in R$, there exist $u,v\in R$ such that $us = s = sv$. Tominaga \cite{Tom} showed that this implies every finite non-empty subset $F\subseteq R$ admits $u,v\in R$ such that $uf = f = fv$ for every $f\in F$ (cf. Theorem 1). In particular, in present terminology, a ring has weak local units if and only if it has common weak local units. For semigroups, however, this is false. Any right zero semigroup does have local units, but does not have common weak local units. The same is true of rectangular bands.
\end{remark}

For an $R$-module $M_R$, where $R$ is a ring (not necessarily with identity) and a morphism $f:M_R\to R_R$ of $R$-modules, the set $M$ can be turned into a ring, where $f$ then becomes an almost injective morphism of rings and conversely, every strict local isomorphism $S\to R$ is, essentially, an $R$-valued linear functional \cite{Valjako}. A similar idea works in the semigroup case.
\begin{proposition}\label{Kristo_P3.9}
Let $S$ be a semigroup, $A_S$ an $S$-act and $\rho :A_S \to S_S$ an $S$-morphism. The following statements hold.
\begin{itemize}
    \item[]1. The set $A$ is a semigroup under multiplication $a\cdot a' := a\rho (a')$. The $S$-morphism $\rho$ is an almost injective semigroup morphism. If $\rho$ is also surjective, then idempotents lift along $\rho$.
    \item[]2. If $T$ is a semigroup with common weak local units, then all strict local isomorphisms $T\to S$ arise in the manner specified in 1.
\end{itemize}
\end{proposition}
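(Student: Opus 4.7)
The plan is to handle the two parts separately. For Part~1, the arguments will be direct verification using only that $\rho$ is an $S$-act morphism. Associativity of the operation $a\cdot a' := a\rho(a')$ reduces to the identity $\rho(a'\rho(a'')) = \rho(a')\rho(a'')$, which expresses $\rho$ being an $S$-morphism; the same identity then shows $\rho$ is a semigroup morphism. For almost injectivity, I would fix $a,b\in A$ with $a\in A\cdot a$ and $b\in b\cdot A$, use the first of these to write $a = c\rho(a)$ for some $c\in A$, and show that whenever $\rho(a\cdot x\cdot b) = \rho(a\cdot y\cdot b)$ holds in $S$, substituting this representation of $a$ allows one to cancel $\rho(a)$ on the left and conclude equality already in $A$. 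To lift an idempotent $f\in E(S)$, take any preimage $a$ of $f$ and set $e := a\cdot a$; a short computation using $f^2 = f$ and the $S$-act axioms shows $\rho(e) = f$ and $e\cdot e = e\cdot f = e$.

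For Part~2, given a strict local isomorphism $\psi:T\to S$ with $T$ having common weak local units, I would take $A := T$ and $\rho := \psi$, and equip $T$ with a right $S$-action defined by $t\star s := tt'$, where $t'\in T$ is any preimage of $s$ under the surjection $\psi$. The key step, and the main obstacle, is well-definedness of this action, which is exactly where the hypothesis on $T$ enters. Indeed, if $\psi(t') = \psi(t'')$, then $\psi(tt') = \psi(t)\psi(t') = \psi(t)\psi(t'') = \psi(tt'')$, and injectivity of $\psi\vert _{tT}$, guaranteed by Lemma~\ref{Kristo_P3.7}, forces $tt' = tt''$. Once well-definedness is established, the right $S$-action axiom follows from associativity in $T$, the map $\psi$ becomes an $S$-morphism since $\psi(t\star s) = \psi(t)s$, and the Part~1 product on $A$ computes as $t\cdot t' = t\rho(t') = t\star \psi(t') = tt'$, reproducing the original multiplication on $T$.

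In summary, Part~1 is a routine verification, and the substantive content of the proposition is concentrated in Part~2, where Lemma~\ref{Kristo_P3.7} is the crucial input that makes the $S$-action on $T$ well-defined and explains the common-weak-local-units hypothesis.
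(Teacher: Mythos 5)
Your proposal is correct and follows essentially the same route as the paper: the same associativity computation, the same trick of writing $a = c\cdot a$ and applying $\rho$ to get almost injectivity, the same idempotent $a\cdot a$ lifting $f$, and in Part 2 the same action $t\star s := tt'$ with well-definedness supplied by Lemma \thref{Kristo_P3.7}. The only cosmetic difference is that the paper verifies injectivity on the larger sets $a\cdot A$ with $a\in A\cdot a$ (which contain the subsemigroups $a\cdot A\cdot b$), whereas you work with $a\cdot A\cdot b$ directly; both are fine.
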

\begin{proof}
For the first item we have associativity due to the equalities
\begin{align*}
    (a\cdot a')\cdot a'' = (a\rho (a'))\cdot a'' = a\rho (a')\rho (a'') = a\rho (a'\rho (a'')) = a\cdot (a'\cdot a''),
\end{align*}
where $a,a',a''\in A$. It is clear that $\rho$ is a semigroup morphism. We show $\rho$ is injective on subsemigroups of the form $a\cdot A$, where $a\in A\cdot a$. Let $a = a'\cdot a$ for some $a'\in A$. Take $m,n\in A$ such that $\rho (a\cdot m) = \rho (a\cdot n)$. Then
\[
    a\cdot m = a'\cdot (a\cdot m) = a'\rho (a\cdot m) = a'\rho (a\cdot n) = a'\cdot (a\cdot n) = a\cdot n.
\]
Assume $\rho$ is surjective and let $e=\rho (a)$ be an idempotent. Then
\[
    a^4 = a\rho (a^3) = a(\rho (a))^3 = a\rho (a) = a^2\in E(A)
\]
and $\rho (a^2) = (\rho (a))^2 = e$.

Now let $\tau :T\to S$ be a strict local isomorphism, where $T$ has common weak local units. Define $t\star s' := tt'$, where $t,t'\in T$ and $s'\in S$ such that $\tau (t') = s'$. Suppose $\tau (t'') = s'$ for some $t''\in T$. Then $\tau (tt') = \tau (t)s' = \tau (tt'')$. By Lemma \ref{Kristo_P3.7}, the map $\tau \vert _{tT}$ is injective, hence $tt
' = tt''$. Thus, the map $\star$ is well defined. Now take $s',s''\in S$, $t\in T$ and assume $\tau (t')=s'$ and $\tau (t'')=s''$ for some $t',t''\in T$. It follows that
\begin{align*}
    (t\star s') \star s'' = tt'\star s'' = tt't'' = t\star (s's''),
\end{align*}
where the last equality holds due to $s's'' = \tau (t't'')$. Thus, we have a right $S$-action on $T$. The equalities
\[
    \tau (t\star s') = \tau (tt') = \tau (t)\tau (t') = \tau (t)s'
\]
show that $\tau$ is an $S$-morphism.
\end{proof}
As we are naturally provided with two Morita semigroups, a Morita context also yields two almost injective semigroup morphisms.
\begin{theorem}\label{LM_T13_g}
Let arbitrary non-empty semigroups $S$ and $T$ be connected by a Morita context $(S,T,{}_SP_T,{}_TQ_S,\theta,\phi)$. Then $\theta$ and $\phi$ are almost injective semigroup morphisms. If $\theta$ is surjective, then idempotents  lift along $\theta$. The same holds for $\phi$.
\end{theorem}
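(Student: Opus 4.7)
The plan is to leverage Example \ref{ex1.3}, which verifies that $\theta$ is a semigroup morphism from the Morita semigroup on $P\ot_T Q$ to $S$ (and analogously $\phi$ on $Q\ot_S P$). By symmetry it suffices to treat $\theta$.

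The engine of the argument is a reduction formula: for all $p_0\ot q_0,\gamma,p_3\ot q_3\in P\ot_T Q$,
\[
    (p_0\ot q_0)\gamma(p_3\ot q_3)=p_0\ot q_0\theta(\gamma)\theta(p_3\ot q_3).
\]
This is obtained by two applications of the alternative form $(p\ot q)(p'\ot q')=p\ot q\theta(p'\ot q')$ of the Morita semigroup multiplication, which rewrites the defining formula $p\ot \phi(q\ot p')q'$ using the Morita identity $\phi(q\ot p')q'=q\theta(p'\ot q')$, together with the biact property $\theta(p\ot qs)=\theta(p\ot q)s$ for $s\in S$.

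For almost injectivity, suppose $\alpha=p_0\ot q_0$ satisfies $\alpha\in (P\ot_T Q)\alpha$, so $\alpha=(p'\ot q')\alpha=p'\ot q'\theta(\alpha)$ for some $p'\ot q'$. Let $\beta=p_3\ot q_3\in\beta(P\ot_T Q)$ and let $\gamma,\gamma_1\in P\ot_T Q$. The reduction formula then gives
\[
    \alpha\gamma\beta=p'\ot q'\theta(\alpha)\theta(\gamma)\theta(\beta),\qquad \alpha\gamma_1\beta=p'\ot q'\theta(\alpha)\theta(\gamma_1)\theta(\beta).
\]
If $\theta(\alpha\gamma\beta)=\theta(\alpha\gamma_1\beta)$, then since $\theta$ is a semigroup morphism we get $\theta(\alpha)\theta(\gamma)\theta(\beta)=\theta(\alpha)\theta(\gamma_1)\theta(\beta)$ in $S$; these elements act identically on $q'\in Q$, forcing $\alpha\gamma\beta=\alpha\gamma_1\beta$.

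For idempotent lifting under surjectivity of $\theta$, choose $p\ot q$ with $\theta(p\ot q)=e\in E(S)$. The reduction formula gives $(p\ot q)^2=p\ot qe$ and hence $(p\ot q)^3=p\ot qe^2=(p\ot q)^2$, so $(p\ot q)^2\in E(P\ot_T Q)$ is the desired preimage of $e$. The only substantive step is spotting the reduction formula; once in place, both conclusions collapse to routine $S$-action manipulation, and the arguments for $\phi$ are verbatim symmetric.
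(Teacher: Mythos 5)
Your proof is correct and takes essentially the same route as the paper: the whole argument rests on the identity $(p\ot q)(p'\ot q')=p\ot q\,\theta(p'\ot q')$ together with $\theta$ being an $S$-act morphism, which is exactly the paper's mechanism. The only difference is one of packaging — the paper abstracts your computation into Proposition \ref{Kristo_P3.9} (any $S$-morphism $\rho:A_S\to S_S$ makes $A$ a semigroup under $a\cdot a'=a\rho(a')$ with $\rho$ almost injective and idempotent-lifting when surjective) and then cites it, whereas you inline the same injectivity and $a^4=a^2$ arguments directly.
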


\begin{proof}
It suffices to prove these statements for $\theta$. The map $\theta:(P\ot _TQ)_S \to S_S$ is an $S$-morphism by definition. Additionally, for every $p\ot q, p'\ot q'\in P\ot _TQ$, we have $(p\ot q)(p'\ot q') = p\ot q\theta (p'\ot q')$. Then $\theta$ is almost injective by the first item of Proposition \ref{Kristo_P3.9}. Therefore, if $\theta$ is also surjective, idempotents lift along $\theta$.
\end{proof}

The following example illustrates that almost injective morphisms need not be injective.
\begin{example}
 Let $S$ be factorizable such that it is not firm. An example of such a semigroup can be found in \cite{LMR}. Then $S\ot _S S$ is firm both as a biact and a semigroup by Theorem 2.6 in \cite{LR} and $\mu :S\otimes _S S \to S,\ s\ot s'\mapsto ss',$ is a biact morphism in the Morita context connecting $S$ and $S\ot _S S$ (cf. Proposition 4.7 in \cite{LR}). The morphism $\mu$ is a strict local isomorphism by Theorem \ref{LM_T13_g}, but it cannot be injective, because $S$ is not firm.
\end{example}

Theorem \ref{LM_T13_g} also generalizes necessity part of Theorem~13 in \cite{LM} from semigroups with local units to 
factorizable semigroups. It could also be considered as an 
analogue of Theorem~3 in \cite{LM}, where  
Rees matrix semigroups are replaced by Morita semigroups. 

\begin{corollary}
If $S$ and $T$ are strongly Morita equivalent 
semigroups then there exists a surjectively defined unitary 
Morita semigroup $Q\otimes_S P$ and a strict local 
isomorphism $\tau: Q\otimes_S P\to T$ along which 
idempotent and regular elements lift.
\end{corollary}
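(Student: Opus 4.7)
The plan is to unwind the definition of strong Morita equivalence and apply the machinery already assembled in Example~\ref{ex1.3} and Theorem~\ref{LM_T13_g}. By hypothesis, $S$ and $T$ are connected by some unitary surjective Morita context $(S,T,{}_SP_T,{}_TQ_S,\theta,\phi)$. I would use this very context to manufacture the required Morita semigroup and isomorphism.

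Concretely, following Example~\ref{ex1.3}, turn $Q\otimes_S P$ into a Morita semigroup over $S$ defined by the biact morphism $\sk{p,q} := \theta(p\ot q) \colon {}_SP\times Q_S \to {}_SS_S$. Since the context is surjective, $\theta$ is surjective, so this Morita semigroup is surjectively defined. Since the biacts $_SP_T$ and $_TQ_S$ are unitary, $SP=P$ and $QS=Q$, so the Morita semigroup $Q\otimes_S P$ is unitary in the sense of the definition.

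Next, set $\tau := \phi \colon Q\otimes_S P \to T$. Example~\ref{ex1.3} already establishes that $\phi$ is a semigroup morphism with respect to the above multiplication on $Q\otimes_S P$ (via the equality $(q\ot p)(q'\ot p') = q\ot p\phi(q'\ot p')$, which rewrites the multiplication in the form $a\cdot a' = a\phi(a')$). Theorem~\ref{LM_T13_g} then yields that $\tau=\phi$ is almost injective; combined with the surjectivity of $\phi$ (from the context being surjective), $\tau$ is a strict local isomorphism. Theorem~\ref{LM_T13_g} further ensures that idempotents lift along $\tau$, and by the remark following the definition of strict local isomorphism (i.e.\ Lemma 3.1 in \cite{LwM}), regular elements then also lift along $\tau$. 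This is everything the corollary asserts.

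Since every ingredient has already been proved, there is no real obstacle here, the main task is bookkeeping: matching the variance conventions of Example~\ref{ex1.3} (so that $\theta$ supplies the multiplication while $\phi$ plays the role of the strict local isomorphism into $T$), and making sure the unitarity of the Morita semigroup is read off from the unitarity of each one-sided act structure on $P$ and $Q$.
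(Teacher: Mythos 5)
Your proposal is correct and follows exactly the route the paper intends: the corollary is stated as an immediate consequence of Example~\ref{ex1.3} and Theorem~\ref{LM_T13_g}, with $Q\otimes_S P$ made into a surjectively defined unitary Morita semigroup via $\theta$ and with $\phi$ serving as the strict local isomorphism onto $T$, idempotents lifting by Theorem~\ref{LM_T13_g} and regular elements by the cited remark. Your bookkeeping of the variance conventions and of unitarity is accurate, so nothing is missing.
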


The following strengthens considerably Theorem 13 in
\cite{LM}. Firstly, only firmness is assumed from the semigroups 
instead of having local units and, secondly, strict local 
isomorphisms are replaced by isomorphisms. It could also be viewed as a semigroup theoretic analogue of Theorem 2.6 in \cite{Anh}.

\begin{theorem}\label{sme_firm}
Let $S$ and $T$ be firm semigroups. The following are equivalent.
\begin{itemize}
    \item[]1. $S$ and $T$ are strongly Morita equivalent.
    \item[]2. $S$ is isomorphic to a surjectively defined Morita semigroup over $T$.
\end{itemize}
\end{theorem}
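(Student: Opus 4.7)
I would prove the two implications separately.

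\textit{For $(1)\Rightarrow(2)$:} Given a unitary surjective Morita context $(S, T, {}_SP_T, {}_TQ_S, \theta, \phi)$, the plan is to regard $P\otimes_T Q$ as a Morita semigroup over $T$ with bracket $\langle q, p\rangle := \phi(q\otimes p)$. This Morita semigroup is surjectively defined because $\phi$ is surjective, and by Example \ref{ex1.3}, $\theta: P\otimes_T Q\to S$ is a surjective semigroup morphism. So the substantive task is to prove injectivity of $\theta$. The plan is to exhibit a bijection $\Phi: P\otimes_T Q\to S\otimes_S S$ such that $\mu_S\circ\Phi = \theta$, where $\mu_S: S\otimes_S S\to S$ is multiplication; since $\mu_S$ is bijective by firmness of $S$, injectivity of $\theta$ will then follow.

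I would define $\Phi(p\otimes q) := s\otimes\theta(p_0\otimes q)$ where $p = sp_0$ (using $P = SP$), with inverse $\Psi(s_1\otimes s_2) := p_1\otimes q_1 s_2$ where $s_1 = \theta(p_1\otimes q_1)$ (using surjectivity of $\theta$). Well-definedness of $\Phi$ with respect to the decomposition $p = sp_0$ and the $T$-tensor relation reduces, through firmness of $S$ (injectivity of $\mu_S$), to the tautology $\theta(p\otimes q) = \theta(p\otimes q)$. For $\Psi$, the Morita identity $q_1\theta(p_2\otimes q_2) = \phi(q_1\otimes p_2)q_2$ allows the rewriting $p_1\otimes q_1 s_2 = s_1 p_2\otimes q_2$ whenever $s_2 = \theta(p_2\otimes q_2)$; this symmetric expression, depending only on $s_1$ and $s_2$, takes care of all the ambiguities at once. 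The identities $\mu_S\circ\Phi = \theta$, $\Phi\circ\Psi = \mathrm{id}$ and $\Psi\circ\Phi = \mathrm{id}$ then follow from direct substitution.

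\textit{For $(2)\Rightarrow(1)$:} I would identify $S$ with the given surjectively defined Morita semigroup $B\otimes_T A$, where $\langle,\rangle: {}_TA\times B_T\to T$ is the surjective bracket. First I would endow $B$ with a left $S$-action via $(b_0\otimes a_0)\cdot b := b_0\langle a_0, b\rangle$ and $A$ with a right $S$-action via $a\cdot(b_0\otimes a_0) := \langle a, b_0\rangle a_0$, using the $T$-bilinearity of $\langle,\rangle$ to check these are well-defined on the $T$-tensors and compatible with the existing $T$-actions. To obtain a unitary context, I would pass to the sub-biacts $P := BT$ and $Q := TA$; firmness of $T$ gives $T^2=T$, and combined with surjectivity of $\langle,\rangle$ and the Morita identity this yields $SP = BT = PT$ and similarly $TQ = TA = QS$, making $P$ a unitary $(S,T)$-biact and $Q$ a unitary $(T,S)$-biact. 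I would then define $\theta: P\otimes_T Q\to S$ as the map induced by the inclusions $P\hookrightarrow B$, $Q\hookrightarrow A$ into $B\otimes_T A = S$, and $\phi: Q\otimes_S P\to T$ by $\phi(a\otimes b) := \langle a,b\rangle$. Surjectivity of $\phi$ is immediate; for $\theta$, I would use firmness of $S$ to write any $s\in S$ as a threefold product, which rewrites as $b_1\langle a_1,b_2\rangle\otimes\langle a_2,b_3\rangle a_3$ lying in the image of $P\otimes_T Q$. The Morita identities are immediate from the definitions of the $S$-actions and of $\phi$.

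\textit{Main obstacle.} The main obstacle is the injectivity of $\theta$ in the direction $(1)\Rightarrow(2)$. The construction of $\Phi$ and especially $\Psi$ requires that various non-canonical choices (the decomposition $p = sp_0$, the lift $s_1 = \theta(p_1\otimes q_1)$, the factorization of $s_2$) collapse to the same element; each such collapse is engineered through a careful interplay between the Morita identities, unitarity of the biacts, surjectivity of $\theta$ and $\phi$, and firmness of $S$.
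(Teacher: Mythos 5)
Your plan is correct, but it takes a genuinely different route from the paper's proof, which consists essentially of two citations. For $1\Rightarrow 2$ the paper invokes Theorem 5.9 of \cite{LMR}, which already supplies a unitary Morita context connecting the firm semigroups $S$ and $T$ whose maps $\theta$ and $\phi$ are \emph{bijective}; the only thing left to check is that $\theta\colon P\otimes_T Q\to S$ respects multiplication, which is the computation of Example \ref{ex1.3}. Your construction of $\Phi\colon P\otimes_T Q\to S\otimes_S S$ with explicit inverse $\Psi(s_1\otimes s_2)=p_1\otimes q_1s_2$ is in effect a self-contained re-derivation of that bijectivity: the identity $p_1\otimes q_1\theta(p_2\otimes q_2)=\theta(p_1\otimes q_1)p_2\otimes q_2$, obtained from the two Morita identities, is exactly the mechanism that kills all the non-canonical choices, and reducing well-definedness of $\Phi$ to injectivity of the multiplication map $S\otimes_S S\to S$ is sound. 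For $2\Rightarrow 1$ the paper cites Talwar's Theorem 5 (a surjectively defined Morita semigroup over $T$ is strongly Morita equivalent to $T$) and concludes by transitivity of strong Morita equivalence on factorizable semigroups; your explicit six-tuple $(S,T,BT,TA,\theta,\phi)$ re-proves that theorem in this setting and connects $S$ and $T$ directly, and your passage to the sub-biacts $BT$ and $TA$ to secure unitarity (which statement 2 does not grant) is a point the citation silently absorbs. What the paper's route buys is brevity; what yours buys is independence from \cite{LMR} and \cite{T} and a completely explicit context. If written out in full, the verifications needing most care are the $S$-balancedness of $\Psi$ (handled by your symmetric expression) and the surjectivity of $\theta$ in direction $2\Rightarrow 1$, where $S=S^3$ from firmness gives $s=b_1\langle a_1,b_2\rangle\otimes\langle a_2,b_3\rangle a_3$ as you indicate; I see no gap in either.
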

\begin{proof}
For $1.\Rightarrow 2.$ assume the firm semigroups $S$ and $T$ are 
strongly Morita equivalent. By Theorem 5.9 in \cite{LMR}, they are connected by a unitary Morita 
context $(S,T,{}_SP_T,{}_TQ_S,\theta,\phi)$ with bijective mappings. Then $P\otimes _TQ$ defined by $\phi\circ \ot:Q\times P\to T$ is a unitary surjectively defined Morita semigroup over $T$. 
Similarly to Example~\ref{ex1.3}, $\theta: P\otimes_T Q\to S$ also respects the semigroup structure and is therefore an isomorphism of semigroups.\par
\noindent For $2.\Rightarrow 1.$, assume that $S$ is isomorphic to a
surjectively defined Morita semigroup $P\otimes_T Q$ over $T$. By Theorem 5 in \cite{T}, the Morita semigroup $P\otimes_T Q$ is strongly 
Morita equivalent to $T$. Using transitivity, we conclude 
that the semigroups $S$ and $T$ are strongly Morita equivalent.
\end{proof}

Hotzel \cite{Hz} noted that a surjectively defined unitary Morita semigroup over a monoid with free acts is a coordinate-free copy of a Rees matrix semigroup over that monoid. Laan and M\'{a}rki \cite{LM} showed that this is true of semigroups with weak local units (cf. Proposition 10). We can make use of their construction to show the following.

\begin{theorem}\label{LM_P10_g}
Let $S$ be a factorizable semigroup and $\mathcal M := \mathcal M(S,U,V,p)$ a Rees matrix semigroup over $S$. Then there exists a unitary Morita semigroup $Q\ot _SP$ and a strict local isomorphism $Q\ot _SP \to \mathcal M$ along which idempotents lift.
\end{theorem}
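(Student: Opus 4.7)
The plan is to mirror the construction behind Proposition~10 in \cite{LM} and then realise the resulting semigroup morphism as the $\phi$-side of a Morita context connecting $S$ with $\mathcal M$, so that Theorem~\ref{LM_T13_g} delivers both almost injectivity and the lifting of idempotents.

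Concretely, I would take ${}_SP := V \times S$ with left action $s \cdot (v, s') := (v, ss')$ and $Q_S := S \times U$ with right action $(s, u) \cdot s' := (ss', u)$, paired via the biact morphism $\langle (v, s'), (s, u) \rangle := s' p(v, u) s$. Since $S$ is factorizable, every $t \in S$ factors as $t = t_1 t_2$, giving $(v, t) = t_1 \cdot (v, t_2) \in SP$ and $(t, u) = (t_1, u) \cdot t_2 \in QS$, so both acts are unitary and $Q \otimes_S P$ is a unitary Morita semigroup over $S$. The candidate morphism is $\tau : Q \otimes_S P \to \mathcal M$ sending $(s, u) \otimes (v, s') \mapsto (u, ss', v)$; it is well defined because both sides of the tensor relation $(sr, u) \otimes (v, s') = (s, u) \otimes (v, rs')$ map to $(u, srs', v)$, and comparing the Morita product with the Rees product shows it is a semigroup morphism. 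Surjectivity is immediate from factorizability, since $(u, s, v) = \tau((s_1, u) \otimes (v, s_2))$ for any factorization $s = s_1 s_2$.

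To secure almost injectivity and the lifting of idempotents in one move, I would extend the picture to a Morita context between $S$ and $\mathcal M$. I equip $P$ with a right $\mathcal M$-action $(v, s')(u, t, v') := (v', s' p(v, u) t)$ and $Q$ with a left $\mathcal M$-action $(u, t, v)(s, u') := (t p(v, u') s, u)$, and take $\theta : P \otimes_\mathcal M Q \to S$ to be $(v, s') \otimes (s, u) \mapsto s' p(v, u) s$ while $\phi := \tau$. A direct verification then confirms the Morita context identities $\theta(p \otimes q) p' = p \phi(q \otimes p')$ and $q' \theta(p \otimes q) = \phi(q' \otimes p) q$, so Theorem~\ref{LM_T13_g} applied to this Morita context yields that $\tau$ is almost injective; together with surjectivity, this makes $\tau$ a strict local isomorphism along which idempotents lift. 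The main obstacle is bookkeeping: tracking the $U$- and $V$-indices and the placements of the sandwich entries $p(v, u)$ when checking that the proposed $\mathcal M$-actions are genuine biact structures and that $\theta$ and $\phi$ respect the tensor relations over $\mathcal M$ and $S$ respectively. Nothing conceptually unexpected arises, since the Rees multiplication and the Morita semigroup multiplication are engineered to align under $\tau$.
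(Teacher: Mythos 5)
Your proposal is correct, and the first half (the unitary acts $V\times S$ and $S\times U$, the pairing $\sk{(v,s'),(s,u)}=s'p(v,u)s$, and the surjective well-defined morphism $\tau$) coincides with the paper's construction up to a relabelling of coordinates. Where you diverge is in how almost injectivity and idempotent lifting are obtained: the paper equips $Q\ot_S P$ with only a right $\mathcal M$-action $\star$ satisfying $x\cdot y = x\star\psi(y)$ and invokes Proposition \ref{Kristo_P3.9} directly, whereas you upgrade the data to a full Morita context $(S,\mathcal M,{}_SP_{\mathcal M},{}_{\mathcal M}Q_S,\theta,\phi)$ with $\phi=\tau$ and apply Theorem \ref{LM_T13_g}. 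Both routes are sound --- I checked that your proposed $\mathcal M$-actions are genuine biact structures, that $\theta$ is balanced over $\ot_{\mathcal M}$, and that the two context identities hold --- but since Theorem \ref{LM_T13_g} is itself a corollary of Proposition \ref{Kristo_P3.9}, your argument routes through the same engine while requiring strictly more verification (the left $\mathcal M$-action on $Q$, the right $\mathcal M$-action on $P$, well-definedness of $\theta$, and the compatibility identities). What this extra work buys is a stronger byproduct: an explicit Morita context connecting $S$ and $\mathcal M$, which makes transparent the remark following the corollaries that $S$ and $\mathcal M$ are strongly Morita equivalent exactly when $S=S\,\im(p)\,S$ (the condition for $\theta$ to be surjective); the paper's minimal $\star$-action avoids that overhead when only the strict local isomorphism is wanted.
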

\begin{proof}
Put $Q_S := (U\times S)_S$ and ${}_SP := {}_S(S\times V)$, where the $S$-action on $Q$ is defined by $(u,s)s' := (u,ss')$ and similarly for $P$.
Due to factorisability of $S$, they are unitary $S$-acts. Define 
\[
 \sk{,} : {}_S(P \times Q)_S \to {}_SS_S,\quad \langle (s,v),(u,s')\rangle = s\,p(v,u)\,s'.
\]
One readily verifies $\sk{,}$ is an $(S,S)$-biact morphism. Define
\[
    \psi : Q\otimes _{S} P \to \mathcal M,\quad (u,s)\otimes (t,v) \mapsto (u,st,v).
\]
Consider the corresponding map $\hat{\psi} : Q\times P \to \mathcal M$. The equalities
\begin{align*}
    \hat{\psi} \left ( (u,s)s_0,(t,v) \right ) = (u,(ss_0)t,v) = (u,s(s_0t),v) = \hat{\psi} \left ( (u,s),s_0(t,v) \right )
\end{align*}
show that $\hat{\psi}$ is $S$-balanced, therefore $\psi$ is well defined by the universal property of the tensor product. Surjectivity of $\psi$ is clear.

Since $S$ is factorizable, every element in $\mathcal M$ can be written as $(u_0,s_0t_0,v_0)$ for some $u_0\in U, v_0\in V$ and $s_0,t_0\in S$. Define $\star : (Q\ot _SP) \times \mathcal M \to Q\ot _SP$ with the equality
\begin{align*}
    \left ((u,s) \ot (t,v)\right ) \star (u_0,s_0t_0,v_0) := (u,s) \ot \sk{(t,v),(u_0,s_0)}(t_0,v_0).
\end{align*}
Take $(u_0,s_0t_0,v_0)\in\mathcal M$ and denote the multiplication of the Morita semigroup $Q\ot _SP$ by the symbol $\cdot$. For every $(u,s)\ot (t,v)\in Q\ot _SP$ we have the equality
\begin{align*}
    \left ((u,s)\ot (t,v)\right ) \star (u_0,s_0t_0,v_0) = \left ((u,s)\ot (t,v)\right )\, \cdot \, \left ((u_0,s_0)\ot (t_0,v_0)\right ).
\end{align*}
Thus, the maps $- \star (u_0,s_0t_0,v_0)$ and $- \cdot \left ((u_0,s_0) \ot (t_0,v_0)\right )$ coincide on $Q\ot _SP$. It follows that $\star$ is well defined on $Q\ot _SP$. Take $(u,s)\ot (t,v)\in Q\ot _SP$. It is clear that in the event $s_0t_0 = s_0't_0'$, we have
\[
    \left ((u,s)\ot (t,v)\right ) \star (u_0,s_0t_0,v_0) = \left ((u,s)\ot (t,v)\right ) \star (u_0,s_0't_0',v_0).
\]
Take also $(u_1,s_1t_1,v_1)\in\mathcal M$. We have the equalities
\begin{align*}
    &\left (\left ((u,s)\ot (t,v)\right ) \star (u_0,s_0t_0,v_0)\right ) \star (u_1,s_1t_1,v_1)\\
    =&\, \left ((u,s) \ot (t,v)\right )\, \cdot \, \left ((u_0,s_0)\ot (t_0,v_0)\right ) \, \cdot \, \left ((u_1,s_1)\ot (t_1,v_1)\right ) \\
    =&\, \left ((u,s) \ot (t,v)\right )\, \cdot \, \left ((u_0,s_0) \ot t_0\,p(v_0,u_1)\,(s_1t_1,v_1)\right ) \\
    =&\, \left ((u,s) \ot (t,v)\right ) \star (u_0,s_0t_0\,p(v_0,u_1)s_1t_1,v_0) \\
    =&\, \left ((u,s) \ot (t,v)\right ) \star \left ( (u_0,s_0t_0,v_0)(u_1,s_1t_1,v_1) \right ).
\end{align*}
Therefore, $\star$ is an $\mathcal M$-action. We also have the equalities
\begin{align*}
    \psi  \left ( \left ((u,s)\ot (t,v)\right ) \star (u_0,s_0t_0,v_0) \right )
    &= \psi  \left ( (u,s) \ot t\,p(v,u_0)\,s_0(t_0,v_0)  \right ) \\
    &= (u,st\,p(v,u_0)\,s_0t_0,v_0) \\
    &= (u,st,v)(u_0,s_0t_0,v_0) \\
    &= \psi ((u,s)\ot (t,v))\,(u_0,s_0t_0,v_0),
\end{align*}
which implies $\psi$ is an $\mathcal M$-morphism. Due to the equality
\[
    \left ((u,s)\ot (t,v)\right ) \cdot \left ((u_0,s_0)\ot (t_0,v_0)\right ) = \left ((u,s)\ot (t,v)\right ) \star \psi ((u_0,s_0)\ot (t_0,v_0))
\]
we have by Proposition \ref{Kristo_P3.9} that $\psi$ is a strict local isomorphism along which idempotents lift.
\end{proof}

\begin{corollary}
Let $S$ be a factorizable semigroup and $\mathcal M$ a Rees matrix semigroup over $S$. Then $\mathcal M$ is a quotient of a unitary Morita semigroup.
\end{corollary}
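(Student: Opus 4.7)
The plan is to invoke Theorem \ref{LM_P10_g} directly. That theorem produces, for any Rees matrix semigroup $\mathcal M$ over a factorizable semigroup $S$, a unitary Morita semigroup $Q\ot_S P$ together with a strict local isomorphism $\psi : Q\ot_S P \to \mathcal M$ (in fact along which idempotents lift, but that extra information is not needed here).

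By the definition of strict local isomorphism given earlier in the paper, such a morphism is in particular surjective. Therefore $\psi$ is a surjective semigroup morphism from a unitary Morita semigroup onto $\mathcal M$. I would then note that the First Isomorphism Theorem for semigroups gives $\mathcal M \cong (Q\ot_S P)/\ker\psi$, so $\mathcal M$ is a quotient of $Q\ot_S P$, which is exactly the claim.

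There is no real obstacle here; the corollary is essentially a restatement of the surjectivity clause inside the definition of strict local isomorphism applied to the map constructed in the preceding theorem. The only thing worth mentioning explicitly is the passage from "surjective semigroup morphism exists" to "$\mathcal M$ is a quotient", which is immediate via the kernel congruence of $\psi$.
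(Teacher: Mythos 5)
Your proposal is correct and follows exactly the route the paper intends: the corollary is an immediate consequence of Theorem \ref{LM_P10_g}, since a strict local isomorphism is by definition surjective, and the homomorphism theorem then exhibits $\mathcal M$ as a quotient of the unitary Morita semigroup $Q\ot_S P$. Nothing further is needed.
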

It also turns out that the construction given by Laan and M\'{a}rki \cite{LM} yields an isomorphism if $S$ is firm.
\begin{corollary}
Let $S$ be a firm semigroup and $\mathcal M := \mathcal M(S,U,V,p)$ a Rees matrix semigroup over $S$. Then $\mathcal M$ is isomorphic to a unitary Morita semigroup over $S$. If $S=S\mathrm{im}(p)S$, then $\mathcal M$ is isomorphic to a surjectively defined unitary Morita semigroup over $S$.
\end{corollary}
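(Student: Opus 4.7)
The plan is to show that the strict local isomorphism $\psi : Q \otimes_S P \to \mathcal M$ produced by Theorem \ref{LM_P10_g} is automatically a bijection under the firmness hypothesis, and hence an isomorphism of semigroups. Recall that $Q_S = (U \times S)_S$, ${}_SP = {}_S(S \times V)$, and $\psi((u,s) \otimes (t,v)) = (u,st,v)$.

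First I would construct a candidate inverse $\Phi : \mathcal M \to Q \otimes_S P$. Since $S$ firm implies $S$ factorizable, every $r \in S$ admits a factorization $r = st$ with $s, t \in S$, and I would set $\Phi(u, r, v) := (u, s) \otimes (t, v)$ for any such choice. The main obstacle is well-definedness of $\Phi$, which is the step where firmness is genuinely used. If $st = s't'$ in $S$, then injectivity of the firm multiplication map gives $s \otimes t = s' \otimes t'$ in $S \otimes_S S$. I would transport this equality along the induced tensor product map $\alpha_u \otimes \beta_v : S \otimes_S S \to (U \times S) \otimes_S (S \times V)$, where $\alpha_u : S_S \to (U \times S)_S$, $r \mapsto (u, r)$, and $\beta_v : {}_SS \to {}_S(S \times V)$, $r \mapsto (r, v)$ are the evident one-sided act morphisms. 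This yields $(u, s) \otimes (t, v) = (u, s') \otimes (t', v)$ in $Q \otimes_S P$, settling well-definedness. Verifying $\Phi \circ \psi = \mathrm{id}$ and $\psi \circ \Phi = \mathrm{id}$ is then immediate from the definitions; combined with the fact, established in Theorem \ref{LM_P10_g}, that $\psi$ is already a semigroup morphism, this proves the first claim.

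For the second assertion, I would observe that the defining biact morphism satisfies $\langle (s, v), (u, s') \rangle = s\, p(v, u)\, s'$, so $\mathrm{im}\langle,\rangle = S \cdot \mathrm{im}(p) \cdot S$. Consequently the Morita semigroup $Q \otimes_S P$ is surjectively defined exactly when $S = S\,\mathrm{im}(p)\,S$, which yields the sharper conclusion under the additional hypothesis.
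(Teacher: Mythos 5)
Your proposal is correct and takes essentially the same route as the paper: both arguments reduce to the observation that $st=s't'$ forces $s\otimes t = s'\otimes t'$ in $S\otimes_S S$ by firmness, and then transport this equality to $Q\otimes_S P$ to conclude that $\psi$ is bijective, with the surjectivity claim in the second assertion handled by the same computation of $\mathrm{im}\langle,\rangle$. The only difference is one of packaging: the paper transfers the equality by explicitly writing out and extending an $S$-tossing, whereas you push it forward along the induced map $\alpha_u\otimes\beta_v$ of tensor products, which is a cleaner justification of the same step.
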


\begin{proof}
Assume the construction given in the proof of Theorem \ref{LM_P10_g}. It is clear that the map $\sk{,}$ is surjective if and only if the equality $S=S\im(p)S$ holds. It remains to show that $\psi$ is injective.
\vspace{1em}

Let the equality $(u,st,v) = (u,s't',v)$ hold in $\mathcal M$. Then $st=s't'$ if and only if $s\ot t = s'\ot t'$ in $S\ot _SS$ due to firmness of $S$. Thus, we have an $S$-tossing connecting $s\ot t$ and $s'\ot t'$, which we may extend to the following $S$-tossing
\begin{equation*}
 \begin{array}{lclcrcl}
   & & & & r_1(y_1,v) & = & (t,v) \\
  (u,s)r_1 & = & (u,x_1)s_1 & & r_2(y_2,v) & = & s_1(y_1,v) \\
  (u,x_1)r_2 & = & (u,x_2)s_2 & & r_3(y_3,v) & = & s_2(y_2,v)  \\
  &\ldots & & & &\ldots & \\
  (u,x_{n-2})r_{n-1} & = & (u,x_{n-1})s_{n-1} & & r_n(y_n,v) & = & s_{n-1}(y_{n-1},v) \\
  (u,x_{n-1})r_n & = & (u,s')s_n & & (t',v) & = & s_n(y_n,v) \\
 \end{array}
\end{equation*}
where $x_i,y_i\in S$ and $r_i,s_i\in S^1$. Equivalently, the equality
\[
    (u,s)\ot (t,v) = (u,s')\ot (t',v)
\]
holds in $Q\ot _SP$. Therefore, $\psi$ is injective.
\end{proof}
\begin{remark}
For a Rees matrix semigroup $\mathcal M := \mathcal M(S,U,V,p)$ over a factorizable semigroup $S$, the condition $S=S\im(p)S$ is equivalent to $\mathcal M$ being factorizable, which, in turn, is equivalent to $S$ being strongly Morita equivalent to $\mathcal M$ by Proposition 2 in \cite{LM}.
\end{remark}

By Laan and Reimaa \cite{LR}, we have that a semigroup $S$ is factorizable if and only if $S\otimes S$ is a firm semigroup. In the same article it is shown that $A\otimes _S B = A\otimes _{S\otimes S}B$ holds for factorizable semigroups $S$. While we do not know, whether Theorem \ref{LM_P10_g} holds for factorizable semigroups, we can conclude the following.

\begin{corollary}
Let $S$ be a factorizable semigroup and $\mathcal M := \mathcal M(S\otimes S,U,V,p)$ a Rees matrix semigroup over $S\otimes S$. Then $\mathcal M$ is isomorphic to a unitary Morita semigroup over $S$, which may be assumed to be surjectively defined if $\mathcal M$ is factorizable.
\end{corollary}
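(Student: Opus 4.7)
The plan is to reduce the statement to the preceding corollary by working with the firm semigroup $T := S\ot_S S$; this is firm because $S$ is factorizable, by the result of \cite{LR} cited just above. Applying the preceding corollary to $\mathcal M = \mathcal M(T,U,V,p)$ yields an isomorphism $\mathcal M \cong Q\ot_T P$, where $Q\ot_T P$ is a unitary Morita semigroup over $T$ with some defining biact morphism $\sk{,}:{}_TP\times Q_T\to{}_TT_T$; concretely one may take $Q = U\times T$ and $P = T\times V$ with the natural $T$-actions, as in the construction of Theorem \ref{LM_P10_g}.

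To transport this Morita semigroup from $T$ to $S$, I would use two ingredients. First, the identification $A\ot_S B = A\ot_T B$ from \cite{LR} (valid since $S$ is factorizable) lets me view the underlying set of $Q\ot_T P$ as $Q\ot_S P$, once $Q$ and $P$ are regarded as $S$-acts via the natural $S$-actions on $T$. Second, the multiplication map $\mu:T\to S$, $s\ot s'\mapsto ss'$, is a surjective biact morphism, so the composition $\sk{,}' := \mu\circ\sk{,}:{}_SP\times Q_S\to{}_SS_S$ is also a biact morphism and turns $Q\ot_S P$ into a unitary Morita semigroup over $S$. I would then verify that $Q\ot_T P$ and $Q\ot_S P$ agree as semigroups; this reduces to the identity $t\cdot p = \mu(t)\cdot p$ on $P$, which holds because the semigroup structure on $T = S\ot_S S$ is compatible with its $S$-biact structure via $\mu$ (e.g.\ $(s_1\ot s_2)(a\ot b) = s_1s_2\ot ab$ in $T$).

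For the final assertion, surjectivity of $\mu$ gives that $\sk{,}'$ is surjective if and only if $\sk{,}$ is. By the preceding corollary applied to $T$, $\sk{,}$ may be chosen surjective exactly when $T = T\,\im(p)\,T$, and by the remark following Theorem \ref{LM_P10_g} this condition is equivalent to factorizability of $\mathcal M$. The main obstacle will be the semigroup-structure verification in the previous paragraph: this is where the factorizability assumption on $S$ must truly enter, tying together the semigroup structure on $T$, the tensor product identification $\ot_S = \ot_T$, and the multiplication map $\mu$.
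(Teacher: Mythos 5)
Your proof is correct and follows the route the paper intends: the corollary is stated without an explicit proof, but the two facts from \cite{LR} quoted immediately before it (firmness of $S\otimes S$ for factorizable $S$, and the identification $A\otimes_S B = A\otimes_{S\otimes S}B$) are exactly the ingredients you use to transfer the preceding corollary from $T = S\otimes S$ down to $S$ via $\mu\circ\sk{,}$. One small overstatement: surjectivity of $\mu$ only yields that $\sk{,}$ surjective implies $\mu\circ\sk{,}$ surjective, not the converse, but that is the only direction the statement requires.
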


\section{Dual pairs and Morita semigroups}

Hotzel \cite{Hz} considers Morita 
semigroups over monoids with $0$. His acts also must have 
a fixed zero element. We show that the mapping presented in Theorem 2.4 in \cite{Hz} is a well-behaved morphism, in general. It also turns out that it is an isomorphism for dual pairs over a semigroup with weak local units.
\vspace{1em}

We would prefer to use notation that is somewhat different from Hotzel's notation. 
For example, we write $S$ instead of $D$ and we do not assume that $S$ is a monoid with zero.

\begin{definition}
A {\bf pair} over a semigroup $S$ consists of 
\begin{itemize}
\item a left act ${_SA}$, 
\item a right act $B_S$, 
\item an $(S,S)$-biact morphism $\sk{,}: {_SA}\times B_S \to {_SS_S}$. 
\end{itemize}
Denote such a pair by $\beta$ and write $ \beta=({_SA},B_S)$.
\end{definition}
Hotzel assumes the acts in a pair are unitary, but we do not need to assume this. Every pair induces a Morita semigroup which is denoted by $B\otimes^\beta_S A$ and defined by $\sk{,}$ \footnote{Hotzel did not call these Morita semigroups. We use Talwar's terminology for the construction inspired by Hotzel's ( cf. p. 386 in \cite{T}).}. Often it is clear $B\ot _S^\beta A$ is taken with respect to $\beta$, so the superscript $\beta$ is omitted. With $\beta$ is associated the following subsemigroup of 
$\END({_SA}) \times \END(B_S)$:
\[
\Omega^\beta := \{(\rho,\sigma)\mid \rho\in \END({_SA}), \sigma\in \END(B_S), 
\sk{\rho (a),b} = \sk{a,\sigma(b)}\mbox{ for all } a\in A,b\in B\}\,.
\]
The multiplication on $\END(_SA) \times \END(B_S)$ is given by the equality
\[
    (f,g)(f',g') := (f'f,gg').
\]

Hotzel refers to such $\rho$ and $\sigma$ as linked endomorphisms. We call such $\rho$ and $\sigma$ \textbf{adjoint} endomorphisms as does \'{A}nh \cite{Anh}.

\begin{example}
Every Morita context induces a number of pairs of adjoint endomorphisms. For a given Morita context $(S,T,P,Q,\theta,\phi)$, we consider the biact morphism 
\[
\sk{\;,\;}: {_SP}\times Q_S \to S, \;\; (p,q)\mapsto \theta(p\ot q). 
\]
For fixed elements $p_0\in P$ and $q_0\in Q$
\begin{align*}
\rho & := \theta(-\ot q_0)p_0\in \END (_SP), \\ 
\sigma & := q_0\theta(p_0\ot -)\in \END (Q_S)
\end{align*}
are adjoint. Indeed, for any $p\in P$ and $q\in Q$
\begin{align*}
    \sk{\rho (p), q} = \sk{\theta (p\otimes q_0)p_0, q} &= \theta (\theta (p\otimes q_0)p_0 \otimes q) \\
    &= \theta (p\otimes q_0)\theta (p_0\otimes q) \\
    &= \theta (p\otimes q_0\theta (p_0\otimes q)) \\
    &= \sk{p,q_0\theta (p_0\otimes q)} \\
    &= \sk{p, \sigma (q)}.
\end{align*}

A symmetric construction works for $\phi$. 
\end{example}

We have an ideal of $\Omega ^\beta$:
\[
    \Omega _1^\beta := \left \{ (\rho,\sigma)\in \Omega ^\beta \mid \exists a\in A, \exists b\in B,\ \rho (A) \subseteq Sa\quad\mbox{and}\quad \sigma (B) \subseteq bS \right \}.
\]
Elements of $\Omega_1^\beta$ are sometimes called adjoint endomorphisms of rank one \footnote{In the terminology of analysis, a linear operator of rank one has a one-dimensional image. In the present case, we do not require equality.}.
We also have another ideal of $\Omega^\beta$: 
\[
\Sigma^\beta = \{(\rho,\sigma)\in \Omega^\beta\mid 
\exists b\in B,\exists a\in A,\  \rho= \sk{-,b}a \mbox{ and } \sigma= b\sk{a,-}\}\subseteq \Omega ^\beta _1.
\] 
Such pairs are denoted with the symbol $[b,a]$. So,
\[
\lbrack b,a \rbrack = \left (\sk{-,b}a, b\sk{a,-}\right ).
\]
Note that for $[b,a],[b',a']\in \Sigma ^\beta$ we have
\begin{align*}
    [b,a][b',a'] &= \left ( \sk{-,b}a\,;\,b\sk{a,-} \right )\,\left ( \sk{-,b'}a'\,;\,b'\sk{a',-}\right ) \\
    &= \left ( \sk{-,b}\sk{a,b'}a' \,;\, b\sk{\sk{a,b'}a',-} \right ) \\
    &= [b, \sk{a,b'}a'].\tag{3.1}\label{Sigma_product}
\end{align*}
Before we proceed, we will justify the above.
\begin{proposition}
The following statements hold.
\begin{itemize}
    \item[]1. The subset $\Omega ^\beta$ is a submonoid.
    \item[]2. The subset $\Omega _1^\beta$ is an ideal in $\Omega ^\beta$.
    \item[]3. The subset $\Sigma ^\beta \subseteq \Omega _1^\beta$ is an ideal in $\Omega ^\beta$.
\end{itemize}
\end{proposition}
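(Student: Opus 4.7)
The plan is to verify each of the three claims by direct manipulation, using the adjointness relation $\sk{\rho(a),b}=\sk{a,\sigma(b)}$ and the fact that the endomorphisms in question respect the $S$-act structures. The only subtlety to keep track of is the twisted multiplication $(f,g)(f',g')=(f'f,gg')$: composition is reversed in the first coordinate and straight in the second.

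For item 1, I will first note that $(\ID_A,\ID_B)\in\Omega^\beta$ trivially and then check closure. Given $(\rho,\sigma),(\rho',\sigma')\in\Omega^\beta$, for arbitrary $a\in A$, $b\in B$ the chain
\[
\sk{\rho'\rho(a),b}=\sk{\rho(a),\sigma'(b)}=\sk{a,\sigma\sigma'(b)}
\]
shows that $(\rho'\rho,\sigma\sigma')=(\rho,\sigma)(\rho',\sigma')$ lies in $\Omega^\beta$, so it is a submonoid.

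For item 2, take $(\rho,\sigma)\in\Omega_1^\beta$ witnessed by $a\in A$, $b\in B$ with $\rho(A)\subseteq Sa$ and $\sigma(B)\subseteq bS$, and let $(\rho',\sigma')\in\Omega^\beta$ be arbitrary. For $(\rho,\sigma)(\rho',\sigma')=(\rho'\rho,\sigma\sigma')$ I use $\rho'(Sa)=S\rho'(a)$ to get $\rho'\rho(A)\subseteq S\rho'(a)$, while $\sigma\sigma'(B)\subseteq\sigma(B)\subseteq bS$; for the opposite order $(\rho',\sigma')(\rho,\sigma)=(\rho\rho',\sigma'\sigma)$ I use $\rho\rho'(A)\subseteq\rho(A)\subseteq Sa$ and $\sigma'\sigma(B)\subseteq\sigma'(bS)=\sigma'(b)S$. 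Both products thus lie in $\Omega_1^\beta$.

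For item 3, I first check that $\Sigma^\beta\subseteq\Omega_1^\beta$: for $[b,a]$ the image of the first component is contained in $Sa$ and the image of the second is contained in $bS$. Closure of $\Sigma^\beta$ under left and right multiplication by arbitrary $(\rho',\sigma')\in\Omega^\beta$ reduces to a one-line computation: using that $\rho'$ is a left $S$-morphism, $\sigma'$ is a right $S$-morphism, and the adjointness of $(\rho',\sigma')$, one finds
\[
(\rho',\sigma')[b,a]=[\sigma'(b),a]\quad\text{and}\quad [b,a](\rho',\sigma')=[b,\rho'(a)],
\]
both of which lie in $\Sigma^\beta$. The multiplicativity of $\Sigma^\beta$ itself is already recorded in equation \eqref{Sigma_product}, which together with the above gives that $\Sigma^\beta$ is a two-sided ideal in $\Omega^\beta$.

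The only mild obstacle is bookkeeping: one must carefully distinguish composition order in the two coordinates when checking left versus right multiplication and remember that the adjointness identity is precisely what converts a $\rho'$ acting inside $\sk{-,b}$ into a $\sigma'(b)$ outside it, and vice versa. Once this convention is fixed, every verification is a short symbolic calculation.
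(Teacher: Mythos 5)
Your proposal is correct and follows essentially the same route as the paper: the same adjointness computation for closure of $\Omega^\beta$, the same containment arguments $\rho'\rho(A)\subseteq S\rho'(a)$, $\sigma'\sigma(B)\subseteq\sigma'(b)S$ for $\Omega_1^\beta$, and the same identities $(\rho',\sigma')[b,a]=[\sigma'(b),a]$ and $[b,a](\rho',\sigma')=[b,\rho'(a)]$ for $\Sigma^\beta$ (the paper writes out one side and leaves the other as "similar", while you record both explicitly). No gaps.
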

\begin{proof}
    For the first item take $(\rho _1,\sigma _1), (\rho _2,\sigma _2)\in \Omega ^\beta$ and let $a\in A, b\in B$. Then
    \begin{align*}
        \sk{\rho _2\rho _1(a), b} = \sk{\rho _1(a), \sigma _2(b)} = \sk{a, \sigma _1\sigma _2(b)}.
    \end{align*}
    Thus, the morphisms $\rho _2\rho _1$ and $\sigma _1\sigma _2$ are adjoint. As $\ID_A$ and $\ID_B$ are clearly adjoint, $\Omega^\beta$ is a monoid.
    
    For the second item take $(\rho,\sigma)\in \Omega ^\beta$ and $(\rho_1,\sigma_1)\in \Omega _1^\beta$. There exist $a_1\in A$ and $b_1\in B$ such that $\rho _1(A) \subseteq Sa_1$ and $\sigma _1(B) \subseteq b_1S$. Then $\rho _1\rho (A)\subseteq \rho _1(A) \subseteq Sa_1$ and
    $
        \sigma\sigma _1(B) \subseteq \sigma (b_1S) = \sigma (b_1) S.
    $
    Thus $\Omega _1^\beta$ is a left ideal in $\Omega ^\beta$. The right ideal case is proved similarly.
    
    For the third item let $(\rho_1,\sigma_1)\in\Sigma ^\beta$, i.e, $\rho _1 = \sk{-,b_1}a_1$ and $\sigma _1 = b_1\sk{a_1,-}$ for some $a_1\in A$ and $b_1\in B$. The inclusion $(\rho_1,\sigma _1) \in \Omega _1^\beta$ is clear. Take $(\rho,\sigma) \in \Omega ^\beta$ and note that for every $x\in A$ we have
    \begin{align*}
        \rho _1\rho (x) = \sk{\rho (x),b_1}a_1 = \sk{x,\sigma (b_1)}a_1
    \end{align*}
    and, on the other hand, for every $y\in B$ we have
    \begin{align*}
        \sigma \sigma _1 (y) = \sigma \left (b_1\sk{a_1,y} \right ) = \sigma (b_1) \sk{a_1,y}.
    \end{align*}
    Therefore, $\rho _1\rho = \sk{-,\sigma (b_1)}a_1$, $\sigma\sigma _1 = \sigma (b_1)\sk{a_1,-}$ and $\Sigma ^\beta$ is a left ideal in $\Omega ^\beta$. The right ideal case is proved similarly.
\end{proof}

\begin{definition}
A pair $\beta = ({_SA},B_S)$ is called {\bf dual} if 
\begin{itemize}
\item[(1)] $\forall a\in A,\exists a'\in A,\quad a\in Sa' \mbox{ and }\sk{a',B}=S$,  
\item[(2)]  $\forall b\in B,\exists b'\in B,\quad b\in b'S \mbox{ and }\sk{A,b'}=S$.
\end{itemize}
\end{definition}

Examples of dual pairs can be found in Hotzel \cite{Hz}. Note that acts ${_SA}$ and $B_S$ in a dual pair are necessarily unitary. By Theorem 2.5 in \cite{Hz}, the equality $\Sigma ^\beta = \Omega _1^\beta$ holds for a dual pair over monoid with $0$. This is also true in case of a dual pair over a semigroup with weak local units.
\begin{theorem}[cf. Theorem 2.5 in \cite{Hz}]\label{Hz_2.5}
Let $\beta = (_SA,B_S)$ be a dual pair over a semigroup $S$ with weak local units. Then $\Sigma ^\beta = \Omega _1^\beta$.
\end{theorem}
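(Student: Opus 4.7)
The inclusion $\Sigma^\beta \subseteq \Omega_1^\beta$ follows from the preceding proposition, so the task is to show $\Omega_1^\beta \subseteq \Sigma^\beta$. Starting with $(\rho,\sigma)\in \Omega_1^\beta$ witnessed by $\rho(A)\subseteq Sa_0$ and $\sigma(B)\subseteq b_0S$, I will first invoke the dual-pair hypothesis to replace $a_0, b_0$ by more useful representatives: since $a_0\in Sa_0'$ with $\sk{a_0',B}=S$ forces $\rho(A)\subseteq Sa_0'$, I may assume $\sk{a_0,B}=S$, and symmetrically $\sk{A,b_0}=S$.

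Because $A$ and $B$ are unitary, the weak local units of $S$ yield $u,v\in S$ with $ua_0=a_0$ and $b_0v=b_0$ (write $a_0=s\hat a$ and pick $u$ with $us=s$, dually for $b_0$). Using the surjectivities just secured, I pick $b^{*}\in B$ with $\sk{a_0,b^{*}}=u$ and $a^{**}\in A$ with $\sk{a^{**},b_0}=v$, and fix (non-canonical) $\tilde\rho(x),\tilde\sigma(y)\in S$ with $\rho(x)=\tilde\rho(x)a_0$ and $\sigma(y)=b_0\tilde\sigma(y)$. My candidate will be $a:=a_0$ and $b:=b_0\,\tilde\rho(a^{**})\in B$, and I claim $(\rho,\sigma)=[b,a]$.

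The easier half of the verification is $b\sk{a,y}=\sigma(y)$: adjointness at $(a^{**},y)$ gives $\tilde\rho(a^{**})\sk{a_0,y}=v\,\tilde\sigma(y)$, hence $b\sk{a,y}=b_0\,\tilde\rho(a^{**})\sk{a_0,y}=b_0v\,\tilde\sigma(y)=b_0\,\tilde\sigma(y)=\sigma(y)$, where $b_0v=b_0$ is precisely what absorbs the non-uniqueness of $\tilde\sigma$.

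The main obstacle is the identity $\sk{-,b}a=\rho$, which requires chaining two instances of adjointness. First, at $(a^{**},b^{*})$ adjointness gives $\tilde\rho(a^{**})u=v\,\tilde\sigma(b^{*})$; acting on $a_0$ and using $ua_0=a_0$ rewrites this as $\rho(a^{**})=va^{*}$, where $a^{*}:=\tilde\sigma(b^{*})a_0$. Second, at $(x,b^{*})$ adjointness gives $\tilde\rho(x)u=\sk{x,b_0}\tilde\sigma(b^{*})$. Combining these: $\sk{x,b}a=\sk{x,b_0}\rho(a^{**})=\sk{x,b_0v}a^{*}=\sk{x,b_0}\tilde\sigma(b^{*})a_0=\tilde\rho(x)ua_0=\rho(x)$, with both $b_0v=b_0$ and $ua_0=a_0$ deployed to clear the choices made in $\tilde\sigma$ and $\tilde\rho$. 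Hence $(\rho,\sigma)=[b,a]\in \Sigma^\beta$.
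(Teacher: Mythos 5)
Your argument is correct and follows essentially the same route as the paper's proof: normalize the generators $a_0,b_0$ via the dual-pair condition, realize the weak local units fixing them as inner products $\sk{a_0,b^*}=u$ and $\sk{a^{**},b_0}=v$, and chain adjointness identities to exhibit $(\rho,\sigma)$ as $[b,a]$. The only cosmetic difference is bookkeeping: you keep $a=a_0$ and absorb the correction factor $\tilde\rho(a^{**})$ into $b$ on the $B$-side, whereas the paper keeps $b=b''$, puts the factor on the $A$-side, and explicitly verifies that its two candidate values $u$ and $v$ coincide.
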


\begin{proof}
The inclusion $\Sigma^\beta \subseteq \Omega_1^\beta$ is clear.  
We show $\Omega _1^\beta \subseteq \Sigma ^\beta$. Take $(\rho,\sigma)\in \Omega _1^\beta$, that is, $\rho (A) \subseteq Sa_1$ and $\sigma (B) \subseteq b'S$ for some $a_1\in A$ and $b'\in B$. We must show that there exist $a\in A$ and $b\in B$ such that $\rho = \sk{-,b}a$ and $\sigma = b\sk{a,-}$.
\vspace{1em}

Since we have a dual pair, $a_1 = sa_2$ for some $s\in S$ and $a_2\in A$. Due to presence of weak local units in $S$ take $s_r\in S$ such that $s=ss_r$ and let $\sk{a_2,b_2} = s_r$ for some $b_2\in B$. Similarly, $b' = b''t$, $t=t_\ell t$ for some $t_\ell \in S$ and $\sk{a'',b''} = t_\ell$ for some $a''\in A$. 
Then $\sigma (b_2) = b'v'$ and $\rho (a'') = u'a_1$ for some $u',v'\in S$. 
Putting $v:= tv'$ and $u:=u's$ we have 
\[
\sigma(b_2) = b'v' = b''tv' = b''v \;\; \mbox{ and } \;\; \rho(a'') = u'a_1 = u'sa_2 = ua_2. 
\]
Hence 
\begin{align*}
u & = u's = u'ss_r = u's\sk{a_2,b_2} = \sk{u'sa_2,b_2} = \sk{\rho(a''),b_2} \\ 
& = \sk{a'', \sigma(b_2)} = \sk{a'',b''tv'} = \sk{a'',b''}tv' = t_\ell tv' = tv' = v.
\end{align*}
Take $x\in A$, then $\rho (x) = za_1$ for some $z\in S$ and
\[
    \sk{\rho (x), b_2}a_2 = \sk{zsa_2,b_2}a_2 = zs\sk{a_2,b_2}a_2 = zss_ra_2 = zsa_2 = za_1 = \rho (x)
\]
and therefore, for every $x\in A$ we have
\[
    \rho (x) = \sk{\rho (x), b_2}a_2 = \sk{x,\sigma (b_2)}a_2 = \sk{x,b''}va_2.
\]
Similarly, for a fixed $y\in B$, we have $\sigma (y) = b'w$ for some $w\in S$ and
\[
    b''\sk{a'',b'w} = b''\sk{a'', b''tw} = b''\sk{a'',b''}tw = b''t_\ell tw = b''tw = b'w = \sigma (y).
\]
Therefore, for every $y\in B$ we have
\[
    \sigma (y) = b''\sk{a'',\sigma (y)} = b'' \sk{\rho (a''),y} = b''\sk{ua_2,y} = b''\sk{va_2,y}.
\]
Thus, it suffices to take $b=b''$ and $a=ua_2$.
\end{proof}

The following is a semigroup theoretic analogue for Proposition 2.2 in \cite{Anh}.
\begin{lemma}\label{Anh_2.2_sgp}
Let $\beta = ({}_SA,B_S)$ be a dual pair, where $S$ is a semigroup with weak local units. Then the Morita semigroup $B\ot _S^\beta A$ has weak local units. If $S$ has local units, then $B\ot _S^\beta A$ also has local units.
\end{lemma}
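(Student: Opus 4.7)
The plan is to exhibit, for each element $b \ot a$ of the Morita semigroup $B \ot _S^\beta A$, a left and a right weak local unit, and then to refine the construction so as to obtain idempotent ones when $S$ has local units. Throughout I use the multiplication $(b_1 \ot a_1)(b_2 \ot a_2) = b_1 \ot \sk{a_1,b_2}a_2$ and the tensor balancing relation $b_1 s \ot a_1 = b_1 \ot s a_1$.

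First, applying conditions (1) and (2) of a dual pair, I decompose $b = b't$ with $\sk{A,b'} = S$ and $a = sa'$ with $\sk{a',B} = S$. Invoking weak local units of $S$, I pick $u,v\in S$ with $ut = t$ and $sv = s$, and then take $\tilde a \in A$, $\tilde b \in B$ realizing $\sk{\tilde a,b'} = u$ and $\sk{a',\tilde b} = v$. A direct computation
\[
(b' \ot \tilde a)(b \ot a) = b' \ot \sk{\tilde a, b't}\,a = b' \ot uta = b' \ot ta = b't \ot a = b \ot a
\]
then shows that $b' \ot \tilde a$ is a left weak local unit for $b \ot a$, and a symmetric calculation shows $\tilde b \ot a'$ is a right weak local unit. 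This settles the first assertion.

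For the second assertion, I strengthen the choices of $u$ and $v$ to idempotents (available by local units in $S$) and replace the candidate units by $b'u \ot \tilde a$ and $\tilde b \ot va'$. The left-unit identity
\[
(b'u \ot \tilde a)(b \ot a) = b'u \ot uta = b'u \ot ta = b'ut \ot a = b't \ot a = b \ot a
\]
still goes through, while idempotency follows from the short chain
\[
(b'u \ot \tilde a)^2 = b'u \ot \sk{\tilde a, b'u}\,\tilde a = b'u \ot u\,\tilde a = b'u^2 \ot \tilde a = b'u \ot \tilde a,
\]
using $\sk{\tilde a,b'u} = \sk{\tilde a,b'}u = u^2 = u$ and the balancing relation; the right unit is treated symmetrically.

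The only nontrivial input is the existence of $\tilde a$ and $\tilde b$ that realize prescribed elements of $S$ through the pairing, which is exactly what conditions (1) and (2) of a dual pair provide. Everything else is careful bookkeeping with the biact morphism properties of $\sk{,}$ and the tensor balancing relation, so I do not expect any genuine obstacle beyond setting up the decompositions in the correct order.
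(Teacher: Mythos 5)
Your proof is correct and follows essentially the same route as the paper: decompose $a=sa'$ and $b=b't$ via the dual pair conditions, use (weak) local units of $S$ on $s$ and $t$, and realize the resulting units through the surjective pairings $\sk{a',-}$ and $\sk{-,b'}$. Even your idempotent $\tilde b\ot va'$ coincides (via the balancing relation) with the square of the weak local unit that the paper uses, so the difference is purely presentational.
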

\begin{proof}
Take $b\ot a\in B\ot _S A$. Since we have a dual pair, there exist $a_1\in A$ and $s\in S$ such that $a=sa_1$. Since $S$ has weak local units, we can write $s=su$ for some $u\in S$. By duality, we also have $u=\sk{a_1,b_1}$ for some $b_1\in B$. Then 
\[
a= sa_1 = sua_1 = s \sk{a_1,b_1}a_1 = \sk{sa_1,b_1} a_1 = \sk{a,b_1} a_1, 
\]
hence 
\[
b\ot a = b\ot \sk{a,b_1} a_1 = (b\ot a)(b_1\ot a_1). 
\]
If $S$ has local units, then we may assume $u$ is an idempotent and we have that
\[
    (b_1\ot a_1)^4 = b_1 \ot \sk{a_1,b_1}^3a_1 = b_1 \ot \sk{a_1,b_1}a_1 = (b_1\ot a_1)^2 \in E \left ( B\ot_S A \right ).
\]
Similarly, $b=b_2t$ and $t=vt$, where $v=\sk{a_2,b_2}$ for some $a_2\in A$. The equality $b\ot a = (b_2\ot a_2)(b\ot a)$ follows.
\end{proof}
\vspace{1em}

It turns out that Hotzel's construction (cf. Theorem 2.4 in \cite{Hz}) yields a morphism with good properties in case of an arbitrary pair.
\begin{theorem}\label{Hz_sli}
For any pair $\beta= ({_SA},B_S)$ over an arbitrary non-empty semigroup $S$ there exists a strict local isomorphism from $B\otimes ^\beta _SA$ onto $\Sigma ^\beta$ along which idempotents lift.
\end{theorem}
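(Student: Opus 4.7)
The plan is to exhibit the natural map
\[
\psi: B\ot^\beta_SA \to \Sigma^\beta, \quad b\ot a \mapsto [b,a],
\]
as a surjective semigroup morphism satisfying the hypothesis of Proposition~\ref{Kristo_P3.9}, so that almost injectivity and idempotent lifting follow at once.

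First I would check that the assignment $(b,a)\mapsto [b,a]$ from $B\times A$ into $\Sigma^\beta$ is $S$-balanced — the identity $[bs,a]=[b,sa]$ is immediate from $\sk{,}$ being an $(S,S)$-biact morphism. Hence $\psi$ is well defined on the tensor product by the universal property, and surjectivity is immediate from the definition of $\Sigma^\beta$. Equation (\ref{Sigma_product}) gives that $\psi$ respects multiplication:
\[
\psi((b\ot a)(b'\ot a')) = \psi(b\ot\sk{a,b'}a') = [b,\sk{a,b'}a'] = [b,a][b',a'].
\]

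Next I would promote $B\ot^\beta_S A$ to a right $\Sigma^\beta$-act via
\[
(b\ot a)\star (\rho,\sigma) := b\ot \rho(a),\qquad (\rho,\sigma)\in\Sigma^\beta.
\]
This is well defined: since $\rho\in\END({}_SA)$, the assignment $1_B\ot\rho$ is a self-map of $B\ot_S A$, and so $(b\ot a)\star(\rho,\sigma)$ depends only on the endomorphism $\rho$, not on any chosen representative. The action axiom follows from functoriality of $1_B\ot -$. For $[b',a']\in\Sigma^\beta$ the first coordinate is $\rho=\sk{-,b'}a'$, hence
\[
(b\ot a)\star\psi(b'\ot a') = (b\ot a)\star[b',a'] = b\ot\sk{a,b'}a' = (b\ot a)(b'\ot a'),
\]
so the Morita semigroup multiplication coincides with $m\cdot m' := m\star\psi(m')$. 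A direct calculation (again using (\ref{Sigma_product})) shows that $\psi$ is a $\Sigma^\beta$-morphism.

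With this the hypotheses of Proposition~\ref{Kristo_P3.9}(1) are met, taking $\Sigma^\beta$ in place of $S$, the act $B\ot^\beta_SA$ in place of $A$, and $\psi$ in place of $\rho$. The proposition yields that $\psi$ is almost injective and, being surjective, a strict local isomorphism along which idempotents lift. The only delicate point is verifying that $\star$ is well defined on $\Sigma^\beta$ — once one observes that $(b\ot a)\star[b',a']$ is nothing but $(1_B\ot\rho)(b\ot a)$ for the endomorphism $\rho$ underlying $[b',a']$, the rest is bookkeeping, and no further assumptions on $S$ or $\beta$ are needed.
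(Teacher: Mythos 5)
Your proposal is correct and follows essentially the same route as the paper: the same map $b\ot a\mapsto[b,a]$, the same right $\Sigma^\beta$-action on $B\ot_S^\beta A$, and the same appeal to Proposition~\ref{Kristo_P3.9} to obtain almost injectivity and idempotent lifting. Your observation that $(b\ot a)\star(\rho,\sigma)=b\ot\rho(a)$ is just $(1_B\ot\rho)(b\ot a)$ is a slightly cleaner way to settle well-definedness of the action than the paper's comparison with right multiplication by $b'\ot a'$, but it is the same argument in substance.
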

\begin{proof}
Define the same way as in \cite{Hz}
\[
    \varphi : B\otimes _S  A \to \Sigma ^\beta,\quad b\otimes a \mapsto [b,a].
\]
The corresponding map 
\[
B\times A\to \Sigma ^\beta,\quad (b,a) \mapsto [b,a],
\]
is $S$-balanced. Indeed, for any $s\in S$ we have the equalities
\[
    \sk{-,bs}a = \sk{-,b}sa \quad\mathrm{and}\quad bs\sk{a,-} = b\sk{sa,-}.
\]
Thus, $\varphi$ is well defined. Surjectivity is clear. Define $\star : (B\ot _S A) \times \Sigma ^\beta \to B\ot _SA$ with the equality
\[
    (b\ot a)\star [b',a'] := b\ot \sk{a,b'}a'.
\]
Let $b'\in B$ and $a'\in A$ be fixed, then for every $b\ot a\in B\ot _SA$ we have the equality $(b\ot a) \star [b',a'] = (b\ot a)(b'\ot a')$. Thus, the maps $- \star [b',a']$ and $- \cdot b'\ot a'$ coincide on $B\ot _SA$. It follows that $\star$ is well defined. Take $b\ot a, b''\ot a''\in B\ot _SA$. The equalities
\begin{align*}
    \left ( b\ot a \star [b'\ot a']\right ) \star b''\ot a'' &= (b\ot a)(b'\ot a')(b''\ot a'') \\
    &= (b\ot a) \left (b'\ot \sk{a',b''}a''\right ) \\
    &= (b\ot a) \star \left [b', \sk{a',b''}a''\right ] \\
    &= (b\ot a) \star \left ( [b',a']\,[b'',a''] \right ) \tag{cf. \ref{Sigma_product}}
\end{align*}
show that $\star$ is a $\Sigma ^\beta$-action. We also have
\begin{align*}
    \varphi \left ( (b\ot a) \star [b',a']  \right ) &= \varphi \left ( b\ot \sk{a,b'}a' \right ) \\
    &= \left [b,\sk{a,b'}a'\right ] \\
    &= [b,a]\,[b',a'] \\
    &= \varphi (b\ot a)\, \left [b',a'\right ].
\end{align*}
Thus, $\varphi$ is a $\Sigma ^\beta$-morphism. Due to
\[
    (b\ot a)(b'\ot a') = (b\ot a) \star \varphi (b'\ot a')
\]
we have by Proposition \ref{Kristo_P3.9} that $\varphi$ is a strict local isomorphism along which idempotents lift. 
\end{proof}

By Theorem 2.4 in \cite{Hz}, for a dual pair $\beta$ over a monoid with zero, the Morita semigroup $B\otimes ^\beta _SA$ is isomorphic to the semigroup $\Sigma ^\beta$. This is also true for semigroups with weak local units.

\begin{theorem}\label{Hz_2.4}
Let $S$ be a semigroup with weak local units and $\beta = (_SA,B_S)$ a dual pair. Then the Morita semigroup $B\otimes ^\beta_SA$ is isomorphic to the semigroup $\Sigma ^\beta$.
\end{theorem}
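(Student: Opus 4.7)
The plan is to leverage Theorem~\ref{Hz_sli}, which already hands us a surjective semigroup morphism $\varphi : B \ot_S^\beta A \to \Sigma^\beta$, $b \ot a \mapsto [b,a]$. Consequently the entire theorem reduces to showing that $\varphi$ is injective. I expect this step to be the crux, since almost injectivity (which we already have) need not imply injectivity in general.

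Suppose $\varphi(b \ot a) = \varphi(b' \ot a')$, i.e.\ $[b,a] = [b',a']$. Unpacking the two components of this equality in $\END({}_SA) \times \END(B_S)$ yields
\[
 \sk{x,b}\,a = \sk{x,b'}\,a' \quad (x\in A), \qquad b\,\sk{a,y} = b'\,\sk{a',y} \quad (y\in B).
\]
The first key observation is that these identities say precisely that $b \ot a$ and $b' \ot a'$ induce the same left and right translations on $B \ot_S A$: from the Morita multiplication $(p\ot q)(c\ot d) = p\,\sk{q,c} \ot d$, the second identity (taking $y=c$) yields $(b\ot a)(c\ot d) = (b'\ot a')(c\ot d)$, and symmetrically the first identity (taking $x=d$) yields $(c\ot d)(b\ot a) = (c\ot d)(b'\ot a')$ for every $c\ot d \in B\ot_S A$.

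Next, since $\beta$ is a dual pair and $S$ has weak local units, Lemma~\ref{Anh_2.2_sgp} supplies weak local units on $B\ot_S A$. Pick a left weak unit $w$ for $b \ot a$ and a right weak unit $z$ for $b' \ot a'$. Combining with the preceding paragraph,
\[
 b \ot a = w(b \ot a) = w(b' \ot a'), \qquad b' \ot a' = (b' \ot a')\,z = (b \ot a)\,z,
\]
and chaining these identities gives
\[
 b' \ot a' = (b \ot a)\,z = \bigl(w(b' \ot a')\bigr)z = w\bigl((b' \ot a')z\bigr) = w(b' \ot a') = b \ot a,
\]
so $\varphi$ is injective and therefore an isomorphism.

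The main obstacle I anticipate is the need for weak local units on $B \ot_S A$ in spite of $S$ potentially lacking common weak local units; Lemma~\ref{Anh_2.2_sgp} is tailor-made for this and does the heavy lifting. Once it is in hand, the translation-equality computation above closes the argument using only one-sided weak local units, never demanding common ones.
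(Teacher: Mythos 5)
Your proof is correct and follows essentially the same route as the paper: reduce to injectivity of the map $\varphi$ from Theorem~\ref{Hz_sli}, reinterpret $[b,a]=[b',a']$ as saying that $b\ot a$ and $b'\ot a'$ induce the same left and right translations, and then use the weak local units supplied by Lemma~\ref{Anh_2.2_sgp} to conclude equality. The only cosmetic difference is that you take a left weak unit for $b\ot a$ and a right weak unit for $b'\ot a'$, whereas the paper makes the mirror-image choice; the chain of equalities is otherwise the same.
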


\begin{proof}
It suffices to show that the map $\varphi$ from the proof of Theorem \ref{Hz_sli} is injective. Suppose
$
    [b,a] = [b',a']
$
for some $b\otimes a, b'\otimes a'$ in $B\otimes _S A$. Said equality means that
\begin{align*}
&\forall a''\in A,\quad \sk{a'',b}a = \sk{a'',b'}a', \\ 
 &\forall b''\in B,\quad  b\sk{a,b''} = b'\sk{a',b''}. 
\end{align*}
By Lemma \ref{Anh_2.2_sgp}, take $b_1\ot a_1, b_2\ot a_2\in B\ot _S A$ such that
\[
    b\ot a = (b\ot a)(b_1\ot a_1) \quad\mathrm{and}\quad b'\ot a' = (b_2\ot a_2)(b'\ot a').
\]

We then have
\begin{align*}
b\ot a & = (b\ot a)(b_1\ot a_1) \\ 
& = b\ot \sk{a,b_1}a_1 \\ 
& = b\sk{a,b_1}\ot a_1 \\ 
& = b'\sk{a',b_1}\ot a_1 \\ 
& = (b'\ot a')(b_1\ot a_1) 
\end{align*}
and therefore, 
\begin{align*}
b\ot a & = (b_2\ot a_2)(b'\ot a')(b_1\ot a_1)\\ 
& = (b_2\ot a_2)(b\ot a) \\ 
& = b_2\ot \sk{a_2,b}a \\ 
& = b_2\ot \sk{a_2,b'}a' \\
& = (b_2\ot a_2)(b'\ot a') \\ 
& = b'\ot a'.
\end{align*}
\end{proof}
We can deduce a sufficient condition for strong Morita equivalence. 

\begin{corollary}
Let $S$ and $T$ be semigroups with weak local units. If $T\cong \Sigma^{\beta}$ for some dual pair $\beta = ({_SA},B_S)$ then $S$ and $T$ are strongly Morita equivalent. 
\end{corollary}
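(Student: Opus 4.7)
The plan is to reduce the claim to Talwar's theorem that a unitary surjectively defined Morita semigroup is strongly Morita equivalent to its base semigroup (the same result invoked in the proof of Theorem \ref{sme_firm}), together with Theorem \ref{Hz_2.4}. The strategy is to pass through the Morita semigroup $B\ot _S^\beta A$ and use transitivity of strong Morita equivalence.

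First, I would note that $S$ and $T$, having weak local units, are factorizable, so transitivity of strong Morita equivalence applies between them. By Theorem \ref{Hz_2.4}, there is an isomorphism $B\ot _S^\beta A \cong \Sigma ^\beta$, and by hypothesis $\Sigma ^\beta \cong T$. Hence $T$ is isomorphic to the Morita semigroup $B\ot _S^\beta A$ over $S$, and in particular $T$ is strongly Morita equivalent to $B\ot _S^\beta A$ (isomorphic semigroups are trivially strongly Morita equivalent via the identity context).

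Next, I would verify that $B\ot _S^\beta A$ is a \emph{unitary surjectively defined} Morita semigroup over $S$. Unitarity is immediate from the remark after the definition of dual pair: condition (1) gives $a = sa' \in SA$ for every $a\in A$, and condition (2) gives $b \in b'S \subseteq BS$ for every $b \in B$. For surjectivity of $\sk{,}: {_SA}\times B_S \to {_SS_S}$, pick any $b \in B$ (non-empty provided $S$ is non-empty, as any $s\in S$ must lie in some $\sk{A,b'} = S$), apply condition (2) to obtain $b'\in B$ with $\sk{A,b'} = S$, so every $s\in S$ is of the form $\sk{a,b'}$ for some $a\in A$. This shows $\sk{,}$ is surjective.

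Applying Theorem 5 of \cite{T} to the unitary surjectively defined Morita semigroup $B\ot _S^\beta A$, we conclude $S$ is strongly Morita equivalent to $B\ot _S^\beta A$. Since $B\ot _S^\beta A$ has weak local units by Lemma \ref{Anh_2.2_sgp} and is therefore factorizable, transitivity of strong Morita equivalence on factorizable semigroups yields $S$ strongly Morita equivalent to $T$, as required. There is no genuine obstacle here beyond assembling the components: the only check requiring any argument is surjectivity of $\sk{,}$, which is a direct use of axiom (2) of a dual pair.
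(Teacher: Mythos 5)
Your argument is correct and essentially the paper's own: both identify $T$ with the surjectively defined Morita semigroup $B\ot_S^\beta A$ via Theorem \ref{Hz_2.4} and then conclude strong Morita equivalence with $S$. The only difference is that you unfold the implication $2.\Rightarrow 1.$ of Theorem \ref{sme_firm} (Talwar's Theorem 5 plus transitivity through the factorizable semigroup $B\ot_S^\beta A$) instead of citing it, and you make explicit the unitarity and surjectivity checks that the paper leaves implicit.
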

\begin{proof}
By Theorem \ref{Hz_2.4}, $T\cong \Sigma^\beta \cong B\ot_S^\beta A$, where $B\ot_S^\beta A$ is surjectively defined. Hence $T$ and $S$ are strongly Morita equivalent by Theorem \ref{sme_firm}. 
\end{proof}

In particular, each dual pair $\beta$ over $S$ with weak local units gives rise to a semigroup $\Sigma^\beta$ which is strongly Morita equivalent to $S$.

\end{document}